\newtheorem{theorem}{Theorem}
\newtheorem{definition}[theorem]{Definition}
\newtheorem{corollary}[theorem]{Corollary}
\newtheorem{example}[theorem]{Example}
\newtheorem{lemma}[theorem]{Lemma}
\newtheorem{proposition}[theorem]{Proposition}
\newtheorem{remark}[theorem]{Remark}
\newcommand{\bepsilon}{\boldsymbol{\epsilon}}
\newcommand{\be}{\boldsymbol{e}}
\newcommand{\bi}{\boldsymbol{i}}
\newcommand{\SL}{\operatorname{SL}}
\newcommand{\ZZ}{\mathbb{Z}}
\author[Demonet]{Laurent Demonet}
\address[Laurent Demonet]{Graduate School of Mathematics, Nagoya University, Furocho, Chikusaku, 464-8602 Nagoya, Japan}
\email{Laurent.Demonet@normalesup.org}
\thanks{The first author was partially supported by JSPS Grant-in-Aid for Young Scientist (B) 26800008.}
\author[Plamondon]{Pierre-Guy Plamondon}
\address[Pierre-Guy Plamondon]{Laboratoire de Math\'ematiques d'Orsay, Univ. Paris-Sud, CNRS, Univ.
Paris-Saclay, 91405 Orsay, France.}
\email{pierre-guy.plamondon@math.u-psud.fr}
\thanks{The second author was partially supported by the French ANR grant SC3A (ANR-15-CE40-0004-01).}
\author[Rupel]{Dylan Rupel}
\address[Dylan Rupel]{Department of Mathematics, University of Notre Dame, Notre Dame, Indiana 46556, USA.}
\email{drupel@nd.edu}
\author[Stella]{Salvatore Stella}
\address[Salvatore Stella]{IN$d$AM - Marie Curie Actions fellow, Universit\`a ``La Sapienza'', Roma, Italy.}
\email{stella@mat.uniroma1.it}
\author[Tumarkin]{Pavel Tumarkin}
\address[Pavel Tumarkin]{Department of Mathematical Sciences, Durham University, South Road, Durham, DH1 3LE, UK.}
\email{pavel.tumarkin@durham.ac.uk}
\title{$\SL_2$-Tilings Do Not Exist in Higher Dimensions (mostly)}
\subjclass[2010]{05E15, 13F60}
\begin{document}
\begin{abstract}
  We define a family of generalizations of $\SL_2$-tilings to higher dimensions called $\bepsilon$-$\SL_2$-tilings.  
  We show that, in each dimension 3 or greater, $\bepsilon$-$\SL_2$-tilings exist only for certain choices of $\bepsilon$.  
  In the case that they exist, we show that they are essentially unique and have a concrete description in terms of odd Fibonacci numbers.
\end{abstract}
\maketitle

\section{$\SL_2$-Tilings of the Plane}
  The aim of this note is to study higher-dimensional analogues of the following object.
  \begin{definition}[\cite{AssemReutenauerSmith}]
    A bi-infinite array $(a_{ij})_{i,j\in\ZZ}$ with $a_{ij}\in\ZZ_{>0}$ is called an \emph{$\SL_2$-tiling of $\ZZ^2$} if the entries satisfy the relation
    \begin{equation}\label{eq:sl2 recursion}
      a_{i,j+1}a_{i+1,j}-a_{ij}a_{i+1,j+1}=1.
    \end{equation}
    A bi-infinite array $(b_{ij})_{i,j\in\ZZ}$ with $b_{ij}\in\ZZ_{>0}$  is called an \emph{anti-$\SL_2$-tiling of $\ZZ^2$} if the entries satisfy the relation
    \begin{equation}\label{eq:anti-sl2 recursion}
      b_{i,j+1}b_{i+1,j}-b_{ij}b_{i+1,j+1}=-1.
    \end{equation}
  \end{definition}
  The notion of an anti-$\SL_2$-tiling is not actually giving anything new as shown by the following lemma, however this notion will be useful for our considerations in higher dimensions.
  \begin{lemma}
    If $(a_{ij})_{i,j\in\ZZ}$ is an $\SL_2$-tiling, then taking $b_{ij}=a_{i,-j}$ gives an anti-$\SL_2$-tiling.
  \end{lemma}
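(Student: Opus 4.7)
The plan is to substitute $b_{ij} = a_{i,-j}$ directly into the left-hand side of \eqref{eq:anti-sl2 recursion} and verify the identity by reindexing. Positivity is automatic, since $b_{ij} = a_{i,-j} \in \ZZ_{>0}$, so the only content to check is the bilinear relation.

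First I would rewrite
\[
  b_{i,j+1}b_{i+1,j} - b_{ij}b_{i+1,j+1} = a_{i,-j-1}a_{i+1,-j} - a_{i,-j}a_{i+1,-j-1},
\]
and then introduce the shift $k = -j-1$ so that $-j = k+1$. The right-hand side becomes
\[
  a_{i,k}a_{i+1,k+1} - a_{i,k+1}a_{i+1,k} = -\bigl(a_{i,k+1}a_{i+1,k} - a_{i,k}a_{i+1,k+1}\bigr),
\]
which by \eqref{eq:sl2 recursion} applied at position $(i,k)$ equals $-1$. This is precisely \eqref{eq:anti-sl2 recursion}.

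There is essentially no obstacle here; the entire argument is index bookkeeping. The conceptual point is simply that reflecting the second coordinate swaps the two diagonals of each $2\times 2$ block, so the determinant of each block changes sign. I would present the computation in a single short display and invoke \eqref{eq:sl2 recursion} at the end.
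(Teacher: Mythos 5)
Your computation is correct: substituting $b_{ij}=a_{i,-j}$, reindexing with $k=-j-1$, and invoking \eqref{eq:sl2 recursion} at $(i,k)$ yields exactly $-1$, and positivity is indeed automatic. The paper states this lemma without proof, and your direct verification is precisely the routine index bookkeeping the authors leave to the reader.
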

  One should think of the difference between $\SL_2$-tilings and anti-$\SL_2$-tilings as viewing the lattice $\ZZ^2$ ``from above'' or ``from below.''  
  The following result from \cite{AssemReutenauerSmith} was our starting point.
  \begin{theorem}[\cite{AssemReutenauerSmith}]
    There exist infinitely many $\SL_2$-tilings of $\ZZ^2$.
  \end{theorem}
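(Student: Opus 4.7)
The plan is to exhibit an explicit infinite family of $\SL_2$-tilings of $\ZZ^2$, parametrised by a shift, and to show they are pairwise distinct.

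Since equation~\eqref{eq:sl2 recursion} is invariant under the diagonal translation $(i,j)\mapsto(i+1,j+1)$, I would first look for a tiling depending only on the difference $n:=i-j$. Setting $h(n):=a_{ij}$ whenever $i-j=n$, the four-term relation collapses to the second-order equation
\[
  h(n-1)\,h(n+1) - h(n)^2 = 1, \qquad n\in\ZZ,
\]
a Cassini-type identity. I would then take $h(n):=F_{2n-1}$ (odd-indexed Fibonacci, extended to negative indices via $F_{-m}=(-1)^{m+1}F_m$), and verify that (i) this sequence consists entirely of positive integers, because the symmetry $F_{2n-1}=F_{1-2n}$ shows the sequence to be the bi-infinite palindrome $\ldots,5,2,1,1,2,5,13,\ldots$, and (ii) it satisfies the displayed relation, which for odd $m=2n-1$ is the special case $F_{m-2}F_{m+2}-F_m^2=1$ of Catalan's identity $F_m^2-F_{m-k}F_{m+k}=(-1)^{m-k}F_k^2$ (with $k=2$). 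This furnishes one concrete $\SL_2$-tiling, namely $a_{ij}:=F_{2(i-j)-1}$.

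To upgrade this single example to infinitely many, for each $c\in\ZZ$ I would set $a^{(c)}_{ij}:=h(i-j+c)$. Since the recurrence is shift-invariant in its argument and~\eqref{eq:sl2 recursion} only involves $h$ evaluated at three consecutive points of that argument, each $a^{(c)}$ is again an $\SL_2$-tiling. The entries $a^{(c)}_{0,0}=F_{2c-1}$ grow without bound as $|c|\to\infty$, so infinitely many of the tilings $a^{(c)}$ are pairwise distinct.

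The main (and essentially only) obstacle is showing that the Fibonacci-based $h$ is a positive integer for \emph{every} $n\in\ZZ$ and satisfies the Cassini-type identity in both directions; this is precisely where the palindromic symmetry $F_{-m}=(-1)^{m+1}F_m$ and Catalan's identity intervene. Everything else is a routine check.
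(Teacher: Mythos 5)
Your construction is correct: with $a_{ij}=h(i-j)$ the relation \eqref{eq:sl2 recursion} does collapse to $h(n-1)h(n+1)-h(n)^2=1$, the odd-indexed Fibonacci sequence extended by $F_{-m}=(-1)^{m+1}F_m$ is the positive palindrome $\dots,5,2,1,1,2,5,\dots$ satisfying this identity (Catalan with $k=2$ for $n\ge 2$, the boundary cases by direct check or by the symmetry $h(n)=h(1-n)$), and the shifts $a^{(c)}$ are pairwise distinct arrays. This is a genuinely different route from the one the paper invokes: the theorem is quoted from \cite{AssemReutenauerSmith}, whose argument is that \emph{any} admissible frontier of $1$'s extends to a unique $\SL_2$-tiling, so the tilings are parametrised by an enormous family of frontiers. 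Your approach is more elementary and fully explicit; note that it is essentially the reflection $j\mapsto -j$ of the paper's own Example~\ref{ex:Fibonacci} (the staircase anti-$\SL_2$-tiling), so you could also have obtained your base tiling by combining that example with the paper's first lemma.

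One caveat worth stating explicitly: since $a^{(c)}_{ij}=a^{(0)}_{i+c,j}$, your infinitely many tilings are all translates of a single one. This does prove the statement as literally written, but it misses the real content of the cited result --- and the intended contrast with Theorems~\ref{thm:antitiling} and~\ref{thm:classification}, which say that in dimension $\ge 3$ the tiling is unique \emph{up to translation}. In dimension $2$ there are infinitely many tilings even modulo translation (e.g.\ from inequivalent frontiers), and your family does not exhibit that. If you want the stronger conclusion by elementary means, you would need at least two tilings that are not translates of one another, for instance by varying the shape of the frontier of $1$'s rather than only shifting a fixed diagonal pattern.
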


  In fact, it is shown in \cite{AssemReutenauerSmith} that any admissible frontier of $1$'s in the lattice, can be completed into a unique $\SL_2$-tiling.  
  An interpretation of all possible $\SL_2$-tilings was later given in \cite{BessenrodtHolmJorgensen} in terms of triangulations of a polygon with infinitely many vertices.

  The following anti-$\SL_2$-tiling will be relevant in our higher dimensional analysis.
  We will call it the \emph{staircase} anti-$\SL_2$-tiling of $\ZZ^2$.
  \begin{example}\label{ex:Fibonacci}
    Consider the anti-$\SL_2$-tiling $(a_{ij})_{i,j\in\ZZ}$ of $\ZZ^2$ with $a_{ij}=1$ if $i+j\in\{0,1\}$.  
    Using \eqref{eq:anti-sl2 recursion} and the well-known recursion $F_{2r-1}F_{2r+3}=F_{2r+1}^2+1$ $(r\ge1)$ for the odd Fibonacci numbers, it is easy to see that
      \[
        a_{ij}
        =
        \begin{cases}
          F_{2r-1} & \text{if $i+j=r\ge1$;}\\
          F_{-2r+1} & \text{if $i+j=r\le0$;}
        \end{cases}
      \]
      where we number the Fibonacci numbers as:
      \[
        \begin{tabular}{|c|c|c|c|c|c|c|c} 
          $F_1$ & $F_2$ & $F_3$ & $F_4$ & $F_5$ & $F_6$ & $F_7$ & $\cdots$\\
          \hline 1 & 1 & 2 & 3 & 5 & 8 & 13 & $\cdots$
        \end{tabular}
      \]
      The following figure is a portion of this tiling. 
      Note the bolded frontier of $1$'s; it is an ``infinite staircase''.
      \begin{center}
        \begin{tikzpicture}
          \draw (0,1) node{${\bf 1}$} (.75,1) node{${\bf 1}$} (1.5,1) node{$2$} (2.25,1) node{$5$} (3,1) node{$13$} (3.75,1) node{$34$} (4.5,1) node{$89$} (5.25,1) node{$233$};
          \draw (0,.5) node{$2$} (.75,.5) node{${\bf 1}$} (1.5,0.5) node{${\bf 1}$} (2.25,0.5) node{$2$} (3,0.5) node{$5$} (3.75,0.5) node{$13$} (4.5,0.5) node{$34$} (5.25,0.5) node{$89$};
          \draw (0,0) node{$5$} (.75,0) node{$2$} (1.5,0) node{${\bf 1}$} (2.25,0) node{${\bf 1}$} (3,0) node{$2$} (3.75,0) node{$5$} (4.5,0) node{$13$} (5.25,0) node{$34$};
          \draw (0,-.5) node{$13$} (.75,-.5) node{$5$} (1.5,-.5) node{$2$} (2.25,-.5) node{${\bf 1}$} (3,-.5) node{${\bf 1}$} (3.75,-.5) node{$2$} (4.5,-.5) node{$5$} (5.25,-.5) node{$13$};
          \draw (0,-1) node{$34$} (.75,-1) node{$13$} (1.5,-1) node{$5$} (2.25,-1) node{$2$} (3,-1) node{${\bf 1}$} (3.75,-1) node{${\bf 1}$} (4.5,-1) node{$2$} (5.25,-1) node{$5$};
          \draw (0,-1.5) node{$89$} (.75,-1.5) node{$34$} (1.5,-1.5) node{$13$} (2.25,-1.5) node{$5$} (3,-1.5) node{$2$} (3.75,-1.5) node{${\bf 1}$} (4.5,-1.5) node{${\bf 1}$} (5.25,-1.5) node{$2$};
          \draw (0,-2) node{$233$} (.75,-2) node{$89$} (1.5,-2) node{$34$} (2.25,-2) node{$13$} (3,-2) node{$5$} (3.75,-2) node{$2$} (4.5,-2) node{${\bf 1}$} (5.25,-2) node{${\bf 1}$};
          \draw (0,-2.5) node{$610$} (.75,-2.5) node{$233$} (1.5,-2.5) node{$89$} (2.25,-2.5) node{$34$} (3,-2.5) node{$13$} (3.75,-2.5) node{$5$} (4.5,-2.5) node{$2$} (5.25,-2.5) node{${\bf 1}$};
        \end{tikzpicture}
      \end{center}
    \end{example}

\section{$\SL_2$-Tilings in Higher Dimensions}
  Denote integer vectors by $\bi=(i_1\dots,i_n)$ and by $\be_k$ the $k$-th unit vector.
  A \emph{signature matrix} is a symmetric $n\times n$ matrix $\bepsilon=(\epsilon_{k\ell})$ with $\epsilon_{k\ell}=\pm 1$ whenever $k\neq \ell$ and $\epsilon_{kk}=-1$. 
  \begin{definition}
    Fix a signature matrix $\bepsilon$.  
    An array $(a_{\bi})_{\bi\in\ZZ^n}$ with $a_{\bi}\in\ZZ_{>0}$ is called an \emph{$\bepsilon-\SL_2$-tiling of $\ZZ^n$} if for each $k \neq\ell$ we have
    \begin{equation}\label{eq:higher sl2 recursion}
      a_{\bi+\be_\ell}a_{\bi+\be_k}-a_{\bi}a_{\bi+\be_k+\be_\ell}=\epsilon_{k\ell}.
    \end{equation}
  \end{definition}
  The requirement on the diagonal entries of signature matrices might seem arbitrary right now because they do not play any role in the above definition; we will see later on that it is indeed a consistent choice.
  
  The situation is now different than the $n=2$ case, all the $\bepsilon$-$\SL_2$-tilings are not necessarily equivalent, however there do remain relations among them.
  \begin{lemma}\label{le:flip}
    Let $\bepsilon=(\epsilon_{k\ell})$ be any signature matrix and write $\bepsilon^{(r)}$ for the matrix obtained from $\bepsilon$ by changing the sign of all the entries in row $r$ and column $r$, leaving the diagonal entries fixed. 
    That is, $\bepsilon^{(r)} =(\epsilon'_{k\ell})$ where $\epsilon'_{k\ell}=-\epsilon_{k\ell}$ if exactly one of $k$ and $\ell$ equals $r$ and $\epsilon'_{k\ell}=\epsilon_{k\ell}$ otherwise. 
    If $(a_{\bi})_{\bi\in\ZZ^n}$ is an $\bepsilon-\SL_2$-tiling, then taking $b_{\bi}=a_{\bi-2i_r\be_r}$ gives an $\bepsilon^{(r)}-\SL_2$-tiling.
  \end{lemma}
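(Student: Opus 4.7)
The plan is to denote by $\sigma_r$ the reflection of $\ZZ^n$ that flips the $r$-th coordinate, so that the definition of $b_{\bi}$ reads simply $b_{\bi} = a_{\sigma_r(\bi)}$. Verifying that $(b_{\bi})$ is an $\bepsilon^{(r)}$-$\SL_2$-tiling amounts to checking \eqref{eq:higher sl2 recursion} for each pair $k\neq\ell$, and the proof splits naturally into two cases according to whether $r$ belongs to $\{k,\ell\}$ or not (the case $k=\ell=r$ being excluded by $k\neq\ell$).

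In the first case, when $k,\ell\neq r$, adding $\be_k$, $\be_\ell$, or $\be_k+\be_\ell$ commutes with $\sigma_r$ since none of these vectors touches the $r$-th coordinate. Thus the left-hand side of \eqref{eq:higher sl2 recursion} for $b$ at $\bi$ is literally the same expression as for $a$ at $\sigma_r(\bi)$, and therefore equals $\epsilon_{k\ell}$. Since $\epsilon'_{k\ell}=\epsilon_{k\ell}$ when $r\notin\{k,\ell\}$, this case is immediate.

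In the second case, say $k=r$ and $\ell\neq r$. Now $\sigma_r(\bi+\be_r)=\sigma_r(\bi)-\be_r$, while $\sigma_r(\bi+\be_\ell)=\sigma_r(\bi)+\be_\ell$ and $\sigma_r(\bi+\be_r+\be_\ell)=\sigma_r(\bi)-\be_r+\be_\ell$. Setting $\bj=\sigma_r(\bi)-\be_r$, the left-hand side of the recursion for $b$ becomes
\[
  a_{\bj+\be_r+\be_\ell}a_{\bj}-a_{\bj+\be_r}a_{\bj+\be_\ell},
\]
which is the negative of the left-hand side of \eqref{eq:higher sl2 recursion} for $a$ at $\bj$, hence equals $-\epsilon_{r\ell}=\epsilon'_{r\ell}$, as desired.

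The argument is entirely formal, so I do not anticipate a genuine obstacle; the only point that deserves care is tracking which $\epsilon$'s change sign, which is exactly encoded in the definition of $\bepsilon^{(r)}$ and matches the sign flip produced by the reflection in the second case. A cleaner write-up might introduce $\sigma_r$ explicitly at the outset and remark that conjugation by $\sigma_r$ sends $\be_k$ to $\be_k$ for $k\neq r$ and to $-\be_r$ for $k=r$, after which the two cases become a single one-line sign computation.
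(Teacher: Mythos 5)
Your proof is correct: the identification $b_{\bi}=a_{\sigma_r(\bi)}$ and the two-case check (with the sign flip coming from $\sigma_r(\bi+\be_r)=\sigma_r(\bi)-\be_r$ and the antisymmetry of the expression $a_{\bj+\be_\ell}a_{\bj+\be_r}-a_{\bj}a_{\bj+\be_r+\be_\ell}$ under swapping $\bj\leftrightarrow\bj+\be_r$) is exactly the routine verification the paper leaves to the reader, as it states the lemma without proof. Nothing is missing.
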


  \begin{definition}
    If $\bepsilon$ is a signature matrix such that $\epsilon_{k\ell}=1$ (resp. $\epsilon_{k\ell}=-1$) whenever $k\neq\ell$,
    we refer to an $\bepsilon$-$\SL_2$-tiling as an \emph{$\SL_2$-tiling} (resp. \emph{anti-$\SL_2$-tiling}) of $\ZZ^n$.
  \end{definition}

  \begin{lemma}\label{le:constant slices}
    Let $n\ge 3$ and assume $(a_{\bi})_{\bi\in\ZZ^n}$ is either an $\SL_2$-tiling or an anti-$\SL_2$-tiling of $\ZZ^n$.
    Then for any $r\in\ZZ$ the set $\{a_{\bi}: \sum_{j=1}^n i_j=r\}$ consists of a single element.
  \end{lemma}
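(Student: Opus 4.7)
The plan is to prove the apparently stronger claim that $a_{\bi+\be_k}=a_{\bi+\be_\ell}$ for every $\bi\in\ZZ^n$ and every pair of distinct indices $k,\ell$. Since the vectors $\be_k-\be_\ell$ span the sublattice $\{\bj\in\ZZ^n:\sum_{s=1}^n j_s=0\}$, this translation invariance forces $a_{\bi}$ to depend only on $\sum_{s=1}^n i_s$, which is the content of the lemma.

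Fix such $\bi,k,\ell$ and, using $n\geq 3$, choose a third index $m\notin\{k,\ell\}$. Write $A=a_{\bi}$, $B=a_{\bi+\be_k}$, $C=a_{\bi+\be_\ell}$, $D=a_{\bi+\be_m}$, and $E=a_{\bi+\be_k+\be_\ell+\be_m}$. The three instances of \eqref{eq:higher sl2 recursion} at $\bi$ for the pairs $(k,\ell)$, $(k,m)$, $(\ell,m)$ give
\[
  a_{\bi+\be_k+\be_\ell}=\frac{BC\mp 1}{A},\quad
  a_{\bi+\be_k+\be_m}=\frac{BD\mp 1}{A},\quad
  a_{\bi+\be_\ell+\be_m}=\frac{CD\mp 1}{A},
\]
where the upper sign is for the $\SL_2$ case and the lower for the anti-$\SL_2$ case.

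I would then compute $E$ in three different ways, by applying \eqref{eq:higher sl2 recursion} at $\bi+\be_k$ for $(\ell,m)$, at $\bi+\be_\ell$ for $(k,m)$, and at $\bi+\be_m$ for $(k,\ell)$, and substituting the formulas above. This produces a cyclically symmetric family of three identities whose first member is
\[
  A^2\,B\,E = B^2CD \mp BC \mp BD + 1 \mp A^2,
\]
the other two being obtained by cyclically permuting $B\to C\to D\to B$. Subtracting the first two of these identities and factoring yields
\[
  (B-C)\bigl(A^2 E - D(BC\mp 1)\bigr)=0,
\]
and resubstituting the value of $E$ from the first identity reduces the second factor to $\mp(A^2+BC\mp 1)/B$. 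Since the entries are positive integers one has $A^2+BC\geq 2$ in both cases, so this second factor is nonzero and therefore $B=C$, as required.

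The main obstacle is a careful sign bookkeeping that runs the $\SL_2$ and anti-$\SL_2$ cases in parallel and confirms that the second factor above really cannot vanish. The $\ZZ_{>0}$ hypothesis enters precisely at this final step, ruling out the otherwise degenerate alternative $A^2+BC\mp 1 = 0$.
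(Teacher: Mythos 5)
Your proposal is correct and follows essentially the same route as the paper: computing the corner entry $a_{\bi+\be_k+\be_\ell+\be_m}$ of a unit cube in three ways, subtracting two of the resulting identities, and using positivity to kill the second factor $A^2+BC\mp 1$ (the paper's $xy+a^2-\epsilon\ge 1$). Your closing observation that the differences $\be_k-\be_\ell$ generate the sum-zero sublattice is a slightly more explicit version of the paper's ``iterate over all triples'' step, but the argument is the same.
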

  \begin{proof}
    Pick any three distinct indices $j,k,\ell\in[1,n]$.
    To prove our claim we compute $a_{\bi+\be_j+\be_k+\be_\ell}$ in terms of $a_{\bi}, a_{\bi+\be_j}, a_{\bi+\be_k}, a_{\bi+\be_\ell}$ in three different ways.
    For simplicity of notation we set:
    \[
      \epsilon_{jk}=\epsilon_{j\ell}=\epsilon_{k\ell}=\epsilon,
      \quad\quad 
      a_{\bi}=a,
      \quad\quad 
      a_{\bi+\be_j}=x,
      \quad\quad 
      a_{\bi+\be_k}=y,
      \quad\quad 
      a_{\bi+\be_\ell}=z.
    \]
    The following picture will be useful.
    \begin{center}
      \begin{tikzpicture}
        \draw (0,0) -- ++(1.5,1)  node[above] {$z$} ;
        \draw (0,0) node[left] {$\frac{xz-\epsilon}{a}$} ;
        \draw (0,0) -- ++(0,-1.6) node[left] {$x$} ;
        \draw [dashed] (1.5,1) -- ++(0,-1.6) node[above right] {$a$};
        \draw [dashed] (0,-1.6) -- ++(1.5,1) ;
        \draw (0,0) -- ++(1.5,-1); %node[below right] {?} ;
        \draw (1.5,1) -- ++(1.5,-1) node[right] {$\frac{yz-\epsilon}{a}$};
        \draw (0,-1.6) -- ++(1.5,-1) node[below]{$\frac{xy-\epsilon}{a}$};
        \draw [dashed] (1.5,-.6) -- ++(1.5,-1);
        \draw (1.5,-1) -- ++(0,-1.6) ;
        \draw (1.5,-1) -- ++(1.5,1) ;
        \draw (1.5,-2.6) -- ++(1.5,1) ;
        \draw (3,0) -- ++(0,-1.6) node[right]{$y$};
      \end{tikzpicture}
    \end{center}
    Using \eqref{eq:higher sl2 recursion} three times we get
    \[
      a_{\bi+\be_j+\be_k}=\frac{xy-\epsilon}{a},
      \quad\quad 
      a_{\bi+\be_k+\be_\ell}=\frac{yz-\epsilon}{a},
      \quad\quad 
      a_{\bi+\be_j+\be_\ell}=\frac{xz-\epsilon}{a}.
    \]
    Then applying \eqref{eq:higher sl2 recursion} three more times gives 
    \begin{equation*}
      a_{\bi+\be_j+\be_k+\be_\ell}=\left\{\begin{array}{l}
        \frac{a_{\bi+\be_j+\be_k}a_{\bi+\be_j+\be_\ell}-\epsilon}{a_{\bi+\be_j}}=\frac{xyz}{a^2}-\epsilon\frac{y+z}{a^2}-\epsilon\frac{a^2-\epsilon}{a^2x}\\
        \frac{a_{\bi+\be_j+\be_k}a_{\bi+\be_k+\be_\ell}-\epsilon}{a_{\bi+\be_k}}=\frac{xyz}{a^2}-\epsilon\frac{x+z}{a^2}-\epsilon\frac{a^2-\epsilon}{a^2y}\\
        \frac{a_{\bi+\be_j+\be_\ell}a_{\bi+\be_k+\be_\ell}-\epsilon}{a_{\bi+\be_\ell}}=\frac{xyz}{a^2}-\epsilon\frac{x+y}{a^2}-\epsilon\frac{a^2-\epsilon}{a^2z}\end{array}\right.
    \end{equation*}
    It follows that $\frac{x-y}{a^2}=\frac{a^2-\epsilon}{a^2x}-\frac{a^2-\epsilon}{a^2y}$ or $(xy+a^2-\epsilon)(x-y)=0$.  
    But $xy+a^2-\epsilon\ge1$ since $a,x,y\ge1$, hence $x=y$.  
    Similarly $y=z$.
    The result then follows by iterating on all possible triples of distinct indices.
  \end{proof}

  We now come to our first main result: in dimension $n$, an ``infinite staircase'' of $1$'s yields the only possible anti-$\SL_2$-tiling.

  \begin{theorem} 
    \label{thm:antitiling}
    For $n\ge3$, there exists a unique (up to translation) anti-$\SL_2$-tiling of $\ZZ^n$. 
    Any of its ``two dimensional slices'' obtained by fixing all but two of the coordinates of $\bi$ is a translation of the staircase anti-$\SL_2$-tiling of $\ZZ^2$ from Example \ref{ex:Fibonacci}.
    In particular, all the integers appearing are odd Fibonacci numbers.
  \end{theorem}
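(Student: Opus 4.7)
The plan is to exploit Lemma \ref{le:constant slices} to reduce the theorem to a one-dimensional recursion. By that lemma, any anti-$\SL_2$-tiling $(a_\bi)_{\bi\in\ZZ^n}$ of $\ZZ^n$ with $n\ge 3$ has the form $a_\bi = f(r)$ where $r := i_1+\dots+i_n$ and $f\colon\ZZ\to\ZZ_{>0}$. Substituting into \eqref{eq:higher sl2 recursion} collapses every instance of the defining recursion to the single identity
\[
  f(r+1)^2 - f(r)\,f(r+2) = -1,\qquad r\in\ZZ,
\]
which is precisely the odd Fibonacci recursion recalled in Example \ref{ex:Fibonacci}. Existence and uniqueness of anti-$\SL_2$-tilings in $\ZZ^n$ thus reduce to showing that this one-dimensional recursion admits a unique $\ZZ_{>0}$-valued solution on $\ZZ$, up to a shift $r\mapsto r+c$.

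The heart of the argument, and what I expect to be the main obstacle, is the claim that $f$ must attain the value $1$. Since $f$ takes positive integer values, its minimum $m$ is realized at some $r_0\in\ZZ$. Evaluating the recursion at $r_0-1$ gives $f(r_0-1)\,f(r_0+1) = m^2+1$, with both factors at least $m$ by minimality. A short case analysis will dispose of $m\ge 2$: if both factors equal $m$ the product is $m^2$; if both are at least $m+1$ the product is at least $(m+1)^2 > m^2+1$; and if exactly one equals $m$ the other must equal $(m^2+1)/m = m + 1/m$, an integer only when $m=1$. Hence $m=1$, and necessarily $\{f(r_0-1), f(r_0+1)\} = \{1,2\}$.

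After translating $r$ so that a pair of consecutive $1$'s of $f$ lands at positions $0$ and $1$, the recursion $f(r+2) = (f(r+1)^2+1)/f(r)$ together with its backward analogue determines $f$ uniquely on all of $\ZZ$, producing exactly the odd Fibonacci sequence of Example \ref{ex:Fibonacci}. Conversely, the identity $F_{2r-1}F_{2r+3} = F_{2r+1}^2+1$ already recalled there shows that $a_\bi := f(i_1+\dots+i_n)$ does define an anti-$\SL_2$-tiling, giving existence. Finally, freezing all but two coordinates $s$ and $t$ leaves the slice $(i_s, i_t) \mapsto f(i_s+i_t+C)$, where $C$ is the sum of the frozen coordinates; this is by inspection a translation of the staircase of Example \ref{ex:Fibonacci}, and all entries are therefore odd Fibonacci numbers by construction.
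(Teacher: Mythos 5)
Your proposal is correct and follows essentially the same route as the paper: both use Lemma \ref{le:constant slices} to reduce to a single sequence indexed by $r=\sum i_j$, locate the minimum value, derive $f(r_0-1)f(r_0+1)=m^2+1$ to force $m=1$ and a neighboring $\{1,2\}$, and then propagate the odd Fibonacci recursion in both directions. Your explicit packaging as a one-variable function $f$ and the slightly more detailed case analysis at the minimum are only cosmetic refinements of the paper's argument.
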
  
  \begin{proof}
    Assume $(a_{\bi})_{\bi\in\ZZ^n}$ is a anti-$\SL_2$-tiling of $\ZZ^n$.  
    Pick $\bi$ with $a_{\bi}$ minimal.  
    Applying \eqref{eq:higher sl2 recursion} gives
    \[
      a_{\bi+\be_1}a_{\bi-\be_2}=a_{\bi}a_{\bi+\be_1-\be_2}+1=a_{\bi}^2+1,
    \]
    where we applied Lemma~\ref{le:constant slices} in the last equality.
    If $a_{\bi}>1$, this implies $a_{\bi+\be_1}<a_{\bi}$ or $a_{\bi-\be_2}<a_{\bi}$, contradicting minimality, so we must have $a_{\bi}=1$.
    In turn, again leveraging Lemma~\ref{le:constant slices}, this implies $\{a_{\bi+\be_k},a_{\bi-\be_k}\}=\{1,2\}$.
    Without loss of generality we will assume $a_{\bi+\be_k}=2$ and $\sum_{j=1}^n i_j=1$.
    Then applying \eqref{eq:higher sl2 recursion} repeatedly shows that $a_{\bi'}$ with $\sum_{j=1}^n i'_j=r\ge1$ is exactly the $r^{th}$ odd Fibonacci number $F_{2r-1}$ (see Example~\ref{ex:Fibonacci}).
    Similarly one sees that $a_{\bi'}$ with $\sum_{j=1}^n i'_j =r\le0$ is the odd Fibonacci number $F_{-2r+1}$.
  \end{proof}

  \begin{proposition}\label{pr:nonexistence}
    There does not exist any $\SL_2$-tiling of $\ZZ^n$ for $n\geq3$.
  \end{proposition}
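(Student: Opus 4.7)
The plan is to mimic the strategy of Theorem~\ref{thm:antitiling}, combining Lemma~\ref{le:constant slices} with a minimality argument, but the outcome this time is non-existence rather than a unique staircase.

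Suppose for contradiction that $(a_{\bi})_{\bi\in\ZZ^n}$ is an $\SL_2$-tiling of $\ZZ^n$ with $n\ge 3$. By Lemma~\ref{le:constant slices}, the value $a_{\bi}$ depends only on $r=\sum_{j=1}^n i_j$, so there is a well-defined sequence $(c_r)_{r\in\ZZ}$ with $c_r\in\ZZ_{>0}$ and $a_{\bi}=c_r$ whenever $\sum_j i_j = r$. Substituting into \eqref{eq:higher sl2 recursion} with $\epsilon_{k\ell}=1$, the entries $a_{\bi+\be_k}$ and $a_{\bi+\be_\ell}$ both equal $c_{r+1}$, so the recursion collapses to the one-dimensional relation
\[
  c_{r+1}^{\,2}-c_rc_{r+2}=1 \qquad (r\in\ZZ),
\]
equivalently $c_{r-1}c_{r+1}=c_r^{\,2}-1$ for every $r$.

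Since the $c_r$ are positive integers, the set $\{c_r:r\in\ZZ\}$ has a minimum; pick $r_0$ with $c_{r_0}$ minimal and write $m=c_{r_0}$. Then $c_{r_0-1}\ge m$ and $c_{r_0+1}\ge m$, hence $c_{r_0-1}c_{r_0+1}\ge m^2$. On the other hand, the displayed identity gives $c_{r_0-1}c_{r_0+1}=m^2-1<m^2$, a contradiction (which also rules out $m=1$, since then the left-hand side would be a product of positive integers equal to $0$).

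I expect essentially no obstacle here: once Lemma~\ref{le:constant slices} is in hand the problem is reduced to a one-dimensional sequence satisfying an anti-Markov-type recursion, and the sign in \eqref{eq:higher sl2 recursion} forces $c_{r-1}c_{r+1}$ to be \emph{strictly less} than $c_r^{\,2}$ at a minimum, whereas minimality forces the reverse inequality. The only point worth double-checking is the reduction step: one must verify that setting $k\ne\ell$ in \eqref{eq:higher sl2 recursion} with $n\ge 3$ really does produce the one-variable recursion for $(c_r)$, which is immediate because Lemma~\ref{le:constant slices} applies precisely when $n\ge 3$.
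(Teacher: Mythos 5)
Your proof is correct and is essentially the paper's argument: both reduce to the identity $c_{r-1}c_{r+1}=c_r^2-1$ via Lemma~\ref{le:constant slices} and derive a contradiction at a minimal value. The only cosmetic difference is that you first collapse the whole tiling to a one-variable sequence, whereas the paper invokes the constant-slices lemma only for the single equality $a_{\bi+\be_1-\be_2}=a_{\bi}$ it needs.
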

  \begin{proof}
    It suffices to show that there is no $\SL_2$-tiling of $\ZZ^3$.
    Assume $(a_{\bi})_{\bi\in\ZZ^3}$ is an $\SL_2$-tiling of $\ZZ^3$.
    Pick $\bi$ with $a_{\bi}$ minimal.
    Applying \eqref{eq:higher sl2 recursion} gives
    \[
      a_{\bi+\be_1}a_{\bi-\be_2}=a_{\bi}a_{\bi+\be_1-\be_2}-1=a_{\bi}^2-1,
    \]
    where we applied Lemma~\ref{le:constant slices} in the last equality.
    But this implies $a_{\bi+\be_1}<a_{\bi}$ or $a_{\bi-\be_2}<a_{\bi}$, contradicting minimality.
  \end{proof}

  \begin{corollary}\label{co:n=3}
    For $n=3$, there are precisely $4$ signature matrices $\bepsilon$ for which there exists an $\bepsilon$-$\SL_2$-tiling. 
    For such $\bepsilon$, this $\bepsilon$-$\SL_2$-tiling is unique (up to translation).
    More precisely, an $\bepsilon$-$\SL_2$-tiling exists if and only if $\epsilon_{12}\epsilon_{13}\epsilon_{23}=-1$.
  \end{corollary}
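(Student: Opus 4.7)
The plan is to use the flip operation of Lemma~\ref{le:flip} as a symmetry on the set of signature matrices, reducing everything to the $\SL_2$ and anti-$\SL_2$ cases. For $n=3$, a signature matrix is determined by the triple $(\epsilon_{12},\epsilon_{13},\epsilon_{23})\in\{\pm1\}^3$ of off-diagonal signs, giving $2^3=8$ possibilities in total.

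First I would observe that a flip in direction $r$ changes exactly the two off-diagonal entries involving index $r$, so the product $\epsilon_{12}\epsilon_{13}\epsilon_{23}$ is preserved by every flip. Each individual flip is an involution, and a direct sign-count shows that any two distinct flips compose to the third, so the three flips generate a group isomorphic to $(\ZZ/2\ZZ)^2$. Its two orbits on signature matrices are therefore precisely the level sets $\{\bepsilon:\epsilon_{12}\epsilon_{13}\epsilon_{23}=+1\}$ and $\{\bepsilon:\epsilon_{12}\epsilon_{13}\epsilon_{23}=-1\}$, each containing exactly four signature matrices.

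Next I would dispose of the orbit of product $+1$ using Proposition~\ref{pr:nonexistence}: if any $\bepsilon$ in this orbit admitted a tiling, applying a suitable sequence of flips would transport it to an $\SL_2$-tiling of $\ZZ^3$, contradicting the proposition. For the orbit of product $-1$, Theorem~\ref{thm:antitiling} supplies an anti-$\SL_2$-tiling, and applying the single flip that carries the all-$-1$ matrix to a given $\bepsilon$ in this orbit transports this tiling to an $\bepsilon$-$\SL_2$-tiling. This establishes existence for exactly the four signature matrices with $\epsilon_{12}\epsilon_{13}\epsilon_{23}=-1$.

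For uniqueness, the same flip carries any $\bepsilon$-$\SL_2$-tiling with product $-1$ back to an anti-$\SL_2$-tiling, which is unique up to translation by Theorem~\ref{thm:antitiling}. The only step requiring a little care is checking that the flip intertwines translations, meaning that translating the flipped array by a vector $\boldsymbol{t}$ corresponds to translating the original array by the vector obtained from $\boldsymbol{t}$ by negating its $r$-th coordinate; this is immediate from the defining formula $b_{\bi}=a_{\bi-2i_r\be_r}$ of Lemma~\ref{le:flip}. I do not expect any real obstacle in the argument; the whole proof is essentially a bookkeeping exercise organised by the $(\ZZ/2\ZZ)^2$-symmetry, with all the hard content already packaged in Theorem~\ref{thm:antitiling} and Proposition~\ref{pr:nonexistence}.
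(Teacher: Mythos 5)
Your proof is correct and follows essentially the same route as the paper: the paper's own argument is precisely that every $3\times 3$ signature matrix lies in the flip-orbit of either the all-$+1$ or the all-$-1$ matrix, so existence and uniqueness transport via Lemma~\ref{le:flip} from Proposition~\ref{pr:nonexistence} and Theorem~\ref{thm:antitiling}. You simply spell out the orbit structure and the invariance of $\epsilon_{12}\epsilon_{13}\epsilon_{23}$ in more detail than the paper's one-line proof.
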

  \begin{proof}
    The claim follows immediately from the observation that any signature matrix for $n=3$ is either one of the two satisfying $\epsilon_{12}=\epsilon_{13}=\epsilon_{23}$ or is obtained from one of these with a single application of Lemma~\ref{le:flip}.
  \end{proof}

  We are finally ready to classify all $\bepsilon$-$\SL_2$-tilings for any $n\geq3$.
  \begin{theorem}
    \label{thm:classification}
    For $n\geq3$, there are precisely $2^{n-1}$ signature matrices $\bepsilon$ for which there exists an $\bepsilon$-$\SL_2$-tiling of $\ZZ^n$.
    They are precisely the signature matrices obtainable from the anti-$\SL_2$-signature matrix by repeated application of Lemma~\ref{le:flip}.
    Whenever an $\bepsilon$-$\SL_2$-tiling exists, it is unique up to translation.
  \end{theorem}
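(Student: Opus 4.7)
The plan is to combine three ingredients already in hand: Theorem~\ref{thm:antitiling} (uniqueness of the anti-$\SL_2$-tiling), Lemma~\ref{le:flip} (which moves $\bepsilon$-$\SL_2$-tilings across different signatures), and Corollary~\ref{co:n=3} (the full three-dimensional classification). The flips of Lemma~\ref{le:flip} generate an involutive action of $(\ZZ/2)^n$ on signature matrices under which the property of admitting a tiling is preserved. The proof reduces to: (i) showing the flip-orbit of the anti-$\SL_2$-signature has size exactly $2^{n-1}$ and thus gives $2^{n-1}$ signatures with unique tilings; and (ii) ruling out every other signature matrix by slicing down to dimension three.

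For (i), a subset $S\subseteq[1,n]$ acts trivially by iterated flips if and only if $|S\cap\{k,\ell\}|$ is even for every $k\neq\ell$; this forces $S=\emptyset$ or $S=[1,n]$, so every $(\ZZ/2)^n$-orbit of signature matrices has size exactly $2^{n-1}$. Iterating Lemma~\ref{le:flip} starting from the anti-$\SL_2$-tiling of Theorem~\ref{thm:antitiling} thus produces an $\bepsilon$-$\SL_2$-tiling for each of the $2^{n-1}$ signatures in the orbit. Because each flip is an involution and sends translations to translations, the uniqueness statement of Theorem~\ref{thm:antitiling} propagates to every signature in the orbit.

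For (ii), suppose $(a_{\bi})_{\bi\in\ZZ^n}$ is an $\bepsilon$-$\SL_2$-tiling and pick three distinct indices $j,k,\ell$; freezing the remaining $n-3$ coordinates gives an $\bepsilon'$-$\SL_2$-tiling of $\ZZ^3$, where $\bepsilon'$ is the principal submatrix of $\bepsilon$ indexed by $j,k,\ell$. Corollary~\ref{co:n=3} then forces $\epsilon_{jk}\epsilon_{j\ell}\epsilon_{k\ell}=-1$ for every triple. A direct count shows that exactly $2^{n-1}$ signature matrices satisfy this condition: freely choose $\epsilon_{1k}$ for $k=2,\ldots,n$, then the triple $\{1,j,k\}$ forces $\epsilon_{jk}=-\epsilon_{1j}\epsilon_{1k}$ for $2\leq j<k\leq n$, and an elementary expansion shows the remaining triple conditions are automatic. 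Since the flips of Lemma~\ref{le:flip} visibly preserve every triple product (the two entries touching the flipped index get negated together), the orbit of the anti-$\SL_2$-signature is contained in this set; comparison of cardinalities closes the argument. The main substantive step is the slice-to-$\ZZ^3$ reduction that produces the triple-product constraint via Corollary~\ref{co:n=3}; once that is in place, the matching of the two sets of cardinality $2^{n-1}$ is combinatorial bookkeeping.
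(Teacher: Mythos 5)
Your proof is correct and follows essentially the same route as the paper: slice down to $\ZZ^3$ to get the triple-product constraint from Corollary~\ref{co:n=3}, count the matrices satisfying it, and match against the flip-orbit of the anti-$\SL_2$-signature, whose tilings come from Theorem~\ref{thm:antitiling} via Lemma~\ref{le:flip}. The only cosmetic difference is that you compute the orbit size via the kernel $\{\emptyset,[1,n]\}$ of the full $(\ZZ/2)^n$-action, while the paper exhibits a free action of $(\ZZ/2)^{n-1}$; both give the same count of $2^{n-1}$.
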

  \begin{proof}
    Let $(a_{\bi})_{\bi\in\ZZ^n}$ be an $\bepsilon$-$\SL_2$-tiling of $\ZZ^n$. 
    Fixing all but any three distinct entries of $\bi$ gives a tiling of $\ZZ^3$.
    Therefore, it follows from Corollary \ref{co:n=3} that we have an inclusion $E \subset E'$, where $E$ is the set of $n\times n$ signature matrices $\bepsilon$ which admit an $\bepsilon$-$\SL_2$-tiling, and $E'$ is the set of $n\times n$ signature matrices $\bepsilon$ satisfying $\epsilon_{jk} \epsilon_{k\ell} \epsilon_{j\ell} = -1$ for any triple of distinct indices $j,k,\ell$.

    Any row (or equivalently any column) of a matrix $\bepsilon$ in $E'$ determines uniquely all the remaining entries of $\bepsilon$, therefore $E'$ is in bijection with $\{\pm 1\}^{n-1}$ and $\#E'= 2^{n-1}$.

    Using Lemma \ref{le:flip}, there is an action of $(\ZZ/2\ZZ)^{n-1}$ on $E$ given by $\bepsilon \mapsto \bepsilon^{(r)}$ for $1 \leq r \leq n-1$. 
    This action is free; indeed the only element of $(\ZZ/2\ZZ)^{n-1}$ leaving invariant the last column of any given matrix of $E$ is the identity. 
    Thanks to Theorem \ref{thm:antitiling}, $E$ is not empty and so we compute $\#E \geq 2^{n-1} = \# E' \geq \#E$ and deduce that $E = E'$.

    The uniqueness claim also follows immediately from Corollary \ref{co:n=3} by fixing all but any three distinct entries of $\bi$.

  \end{proof}

  Note that the claim of Theorem \ref{thm:classification} could be rephrased by saying that, up to fixing the origin and choosing the orientation of each of the coordinate axes, there is a unique tiling of $\ZZ^n$ for $n\ge3$.
  
  \begin{remark}
    It is now clear why we choose the diagonal entries of $\bepsilon$ to be equal to $-1$: any $\bepsilon$-$\SL_2$-tiling consists of odd Fibonacci numbers and \eqref{eq:higher sl2 recursion} is satisfied also for $k=\ell$.
  \end{remark} 

\section*{Acknowledgements}
  These results were obtained while we were taking part in the Conference on Cluster Algebras and Representation Theory at the KIAS in Seoul, South Korea. 
  We would like to thank the organizers of that meeting for the stimulating environment they provided.
  
  We are especially grateful to Peter J\o rgensen for his very inspiring talk at the same conference: the results presented here are the content of the exciting discussion he sparked.

\end{document}